\newlength{\theorempostskipamount}
\newenvironment{theorem}[1][]
{\paragraph{Theorem~\theparagraph{\normalfont \spcifnec{#1}}.} \it}
{\vspace{\the\theorempostskipamount}}
\newenvironment{proposition}[1][]
{\paragraph{Proposition~\theparagraph{\normalfont \spcifnec{#1}}.} \it}
{\vspace{\the\theorempostskipamount}}
\newenvironment{remark}[1][]
{\paragraph{Remark~\theparagraph{\normalfont \spcifnec{#1}}.} }
{\vspace{\the\theorempostskipamount}}
\newenvironment{corollary}[1][]
{\paragraph{Corollary~\theparagraph{\normalfont \spcifnec{#1}}.} \it}
{\vspace{\the\theorempostskipamount}}
\def\definition{\@ifnextchar[{\@definitionopt}{\@definitionplain}}
\def\@definitionplain{\paragraph{Definition} \it}
\def\@definitionopt[#1]{\paragraph{Definition \normalfont #1}  \it}
\def\question{\@ifnextchar[{\@questionopt}{\@questionplain}}
\def\@questionplain{\paragraph{Question} \it}
\def\@questionopt[#1]{\paragraph{Question \normalfont #1}  \it}
\def\problem{\@ifnextchar[{\@problemopt}{\@problemplain}}
\def\@problemplain{\paragraph{Problem} \it}
\def\@problemopt[#1]{\paragraph{Problem \normalfont #1}  \it}
\def\conjecture{\@ifnextchar[{\@conjectureopt}{\@conjectureplain}}
\def\@conjectureplain{\paragraph{Conjecture} \it}
\def\@conjectureopt[#1]{\paragraph{Conjecture \normalfont #1}  \it}
\def\remarks{\@ifnextchar[{\@remarksopt}{\@remarksplain}}
\def\@remarksplain{\paragraph{Remarks} \it}
\def\@remarksopt[#1]{\paragraph{Remarks \normalfont #1}  \it}
\def\example{\@ifnextchar[{\@exampleopt}{\@exampleplain}}
\def\@exampleplain{\paragraph{Example}}
\def\@exampleopt[#1]{\paragraph{Example {\normalfont #1}}}
\def\examples{\@ifnextchar[{\@examplesopt}{\@examplesplain}}
\def\@examplesplain{\paragraph{Examples} \it}
\def\@examplesopt[#1]{\paragraph{Examples \normalfont #1}  \it}
\def\exercise{\@ifnextchar[{\@exerciseopt}{\@exerciseplain}}
\def\@exerciseplain{\paragraph{Exercise} \it}
\def\@exerciseopt[#1]{\paragraph{Exercise \normalfont #1}  \it}
\def\notation{\@ifnextchar[{\@notationopt}{\@notationplain}}
\def\@notationplain{\paragraph{Notation} \it}
\def\@notationopt[#1]{\paragraph{Notation \normalfont #1}  \it}
\def\convention{\@ifnextchar[{\@conventionopt}{\@conventionplain}}
\def\@conventionplain{\paragraph{Convention} \it}
\def\@conventionopt[#1]{\paragraph{Convention \normalfont #1}  \it}
\def\warning{\@ifnextchar[{\@warningopt}{\@warningplain}}
\def\@warningplain{\paragraph{Warning} \it}
\def\@warningopt[#1]{\paragraph{Warning \normalfont #1}  \it}
\newenvironment{de'finition}[1][]
{\paragraph{Définition} {\normalfont #1} \it}
{\vspace{\the\theorempostskipamount}}
\newcommand{\thmendspace}{\vspace{\the\theorempostskipamount}}
\newenvironment{proof}[1][Proof]{\noindent \textit{#1.~}}
{\hfill $\Box$}
\newcommand{\dialogue}[1]%
{\textcolor{red}{$\ulcorner$ #1 %\hfill
 $\lrcorner$}}
\newcommand{\aparte}[1]%
{$\ulcorner$ #1 %\hfill
 $\lrcorner$}
\newlength{\parindentmem}
\def\@removefromreset#1#2{\let\@tempb\@elt
   \def\@tempa#1{@&#1}\expandafter\let\csname @*#1*\endcsname\@tempa
   \def\@elt##1{\expandafter\ifx\csname @*##1*\endcsname\@tempa\else
         \noexpand\@elt{##1}\fi}%
   \expandafter\edef\csname cl@#2\endcsname{\csname cl@#2\endcsname}%
   \let\@elt\@tempb
   \expandafter\let\csname @*#1*\endcsname\@undefined}
\let\c@equation\c@subparagraph
\renewcommand{\theparagraph}{\arabic{paragraph}}
\renewcommand{\thesubparagraph}
{(\arabic{section}.\arabic{paragraph}.\arabic{subparagraph})}
\titleformat{\part}[display]{\normalfont\Large\bfseries}%
{\partname}{0cm}{}
\titleformat{\section}[hang]{\normalfont\Large\bfseries}{}{0cm}%
{\thesection \  --\ }
\titleformat{\subsection}[hang]{\normalfont\large\bfseries}{}{0cm}%
{\thesubsection\ -- \,}
\titleformat{\subsubsection}[hang]{\normalfont\bfseries}{}{0cm}{}
\newcommand{\spcifnec}[1]
{\ifx#1\empty
\else ~#1\!
\fi}
\titleformat{\paragraph}[runin]{\normalfont\bfseries}
{}{0cm}{}%[~\theparagraph.]%[\kern -1em]
\titlespacing{\paragraph}{0cm}%left margin
{1.7ex plus 1ex minus .2ex}%beforesep
{.5em}%aftersep
\titleformat{\subparagraph}[runin]{\it}
{\thesubparagraph}{0cm}{\spcifnec}%[\kern -1em]
\titlespacing{\subparagraph}{0cm}%left margin
{0mm}%beforesep
{.5em}%aftersep
\titleformat{\section}[hang]{\normalfont\large\bfseries}{}{0cm}{}
\titleformat{\subsection}[hang]{\normalfont\bfseries}{}{0cm}{}
\renewcommand{\theparagraph}{(\Alph{paragraph})}
\newenvironment{closing}%
{\titleformat{\section}[hang]{\normalfont\large\bfseries}{}{0cm}{}
\setlength{\itemsep}{0mm}
\small
%\clearpage
}
{}
\renewcommand\@maketitle{%
  \newpage
  \begin{center}%
  \let \footnote \thanks
    {\Large \bf \@title \par}%
    \vskip 1em%
    {\large
      \begin{tabular}[t]{c}%
        \@author
      \end{tabular}\par}%
  \end{center}%
  \par
  \vskip 1em}
\renewenvironment{abstract}
{\small \quotation
\noindent {\bf Abstract.}}{\endquotation \vskip .7cm}
\renewcommand{\P}{\mathbf{P}}
\newcommand{\PH}{\mathbf{P}\kern -.05em \mathrm{H}}
\renewcommand{\O}{\mathcal{O}}
\newcommand{\dlbrack}{[ \kern -.4ex [}
\newcommand{\drbrack}{] \kern -.4ex ]}
\renewcommand{\epsilon}{\varepsilon}
\renewcommand{\geq}{\geqslant}
\renewcommand{\leq}{\leqslant}
\newcommand{\taylordisc}{\smash
{(\raisebox{-.8mm}{\textpmhg{a}}\kern -.7mm)}}
\let\oldflat\flat
\def\flat#1{\vphantom{#1}^\oldflat \kern -.1em #1}
\begin{document}

\renewcommand{\O}{\mathcal{O}}

\setdefaultenum{(i)}{}{}{}
%\setdefaultitem{---}{}{}{}

\title{Comment on:
On the irreducibility of the Severi variety of nodal curves in a
smooth surface, by E.~Ballico}
\author{Thomas Dedieu}

\maketitle

\begin{abstract}
In this short note, I point out that results of Ballico and 
Kool--Shende--Thomas together imply that on
$K3$, Enriques, and Abelian surfaces,
if $L$ is a very ample and $(2p_a(L)-2g-1)$-spanned line bundle,
then the equigeneric Severi variety 
$V_{g}(L)$ of all curves in $|L|$ having genus $g$
is non-empty, irreducible, of the expected dimension, and its general
member is a $(p_a(L)-g)$-nodal curve.
\end{abstract}

Let $S$ be a smooth, complex, projective surface, and $L$ an effective
line bundle on $S$. We denote by $p(L)$ the common arithmetic genus of
all members of the linear system $|L|$. For nonnegative integers $g$
and $\delta$, we consider the
\emph{equigeneric Severi variety} $V_g(L)$ 
(resp.\ \emph{nodal Severi variety} $V^\delta(L)$),
namely the locally closed subset in $|L|$ corresponding to reduced
curves of geometric genus $g$ (resp.\ with $\delta$ ordinary double
points and no further singularity).
In particular, $V^\delta(L)$ is an open subset of
$V_{p(L)-\delta}(L)$. 

In the recent paper \cite{ballico} Ballico has proven that if $L$ is
very ample and $(2\delta-1)$-spanned, then the nodal Severi variety
$V^\delta(L)$, if non-empty, is irreducible of codimension $\delta$
in $|L|$.
Here I show how this result can be enhanced by taking in consideration
a former result due to Kool, Shende and Thomas. 
This text is merely intended as a complement to \cite{ballico}, and I
thank Edoardo Ballico for giving me the opportunity to write this up.

\begin{theorem}
\label{t:Ktriv}
Let $S$ be a $K3$ (resp.\ Enriques, resp.\ Abelian) surface,
and $L$ a line bundle on it. 
Consider an integer $g \leq p(L)$. If $L$ is very ample and
$(2p(L)-2g-1)$-spanned, then the equigeneric Severi variety
$V_g(L)$ is non-empty and irreducible of dimension $g$ (resp.\
$g-1$, resp.\ $g-2$), and
the general member of $V_g(L)$ is a nodal curve.
\end{theorem}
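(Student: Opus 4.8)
The plan is to organize everything around the integer $\delta := p(L)-g \geq 0$. Then $2p(L)-2g-1 = 2\delta-1$, so the hypothesis that $L$ be $(2p(L)-2g-1)$-spanned is exactly Ballico's hypothesis for $V^\delta(L)$, and $V^\delta(L)$ is the open nodal stratum of $V_{p(L)-\delta}(L) = V_g(L)$. First I would quote Ballico's theorem from the introduction: since $L$ is very ample and $(2\delta-1)$-spanned, the nodal Severi variety $V^\delta(L)$ is, \emph{if non-empty}, irreducible of codimension $\delta$ in $|L|$. It then remains to establish (a) non-emptiness, (b) the value of $\dim|L|$ for each surface type, and (c) that the whole equigeneric variety $V_g(L)$ is irreducible with nodal general member.

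For (a) and (c) I would invoke the theorem of Kool--Shende--Thomas. All three surfaces have numerically trivial canonical class ($K_S=0$ on $K3$ and Abelian surfaces, while $K_S$ is torsion, hence numerically trivial, on an Enriques surface), and for such surfaces with $L$ positive in the above sense their result yields that $V_g(L)$ is non-empty and that its general member is a $\delta$-nodal curve, of the expected dimension $\dim|L|-\delta$. Non-emptiness makes Ballico's theorem applicable. The nodality assertion says exactly that the open stratum $V^\delta(L)$ is dense in $V_g(L)$; since $V^\delta(L)$ is moreover irreducible, its closure $\overline{V^\delta(L)}$ is irreducible and equals $V_g(L)$. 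Hence $V_g(L)$ is irreducible, with nodal general member, of dimension $\dim|L|-\delta$. (Had $V_g(L)$ carried a second component, it too would meet the dense $V^\delta(L)$, which an irreducible closure cannot contain alongside the first.)

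For (b) I would run Riemann--Roch on each surface. Adjunction gives $L^2 = 2p(L)-2$ in all three cases, since $K_S \cdot L = 0$. Very ampleness (hence ampleness) forces $h^2(L) = h^0(K_S-L) = 0$ by Serre duality, and $h^1(L)=0$ as well: on $K3$ and Enriques surfaces one applies Kodaira vanishing to $L = K_S + (L-K_S)$ with $L-K_S$ ample, and on an Abelian surface an ample line bundle has no higher cohomology. Therefore $h^0(L) = \chi(L) = \tfrac{1}{2} L^2 + \chi(\O_S) = (p(L)-1) + \chi(\O_S)$, and feeding in $\chi(\O_S) = 2,\,1,\,0$ for $K3$, Enriques, and Abelian surfaces respectively gives $\dim|L| = p(L),\ p(L)-1,\ p(L)-2$. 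Subtracting $\delta = p(L)-g$ produces the asserted dimensions $g$, $g-1$, $g-2$; note that it is precisely the Abelian case $\chi(\O_S)=0$ that accounts for the drop to $g-2$.

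The main obstacle is the invocation in (a)/(c): extracting from Kool--Shende--Thomas the exact statement that, under a $(2\delta-1)$-spanning hypothesis, $V_g(L)$ is non-empty and its non-nodal equigeneric locus is a proper closed subset, and checking that whatever positivity they require is implied by ``very ample and $(2\delta-1)$-spanned''. Once that black box is in place the argument is formal: Ballico supplies irreducibility of the open stratum, Riemann--Roch supplies the dimension surface by surface, and the passage from ``dense irreducible open stratum'' to ``irreducible variety with nodal general member'' is immediate. I would expect no real difficulty in the cohomological computations themselves.
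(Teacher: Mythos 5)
There is a genuine gap, concentrated exactly where you flag ``the main obstacle'': the black box you want from Kool--Shende--Thomas does not exist in the form you need, and the missing ingredient that makes the paper's argument work is one you never invoke. The actual KST statement (Prop.~\ref{p:kst}) is about a \emph{general $\delta$-dimensional linear subsystem} $\P^\delta \subseteq |L|$: it contains finitely many $\delta$-nodal curves and all its other members are reduced of geometric genus $> p(L)-\delta$. By itself this gives neither non-emptiness of $V_g(L)$ (the finite number could a priori be zero) nor density of $V^\delta(L)$ in $V_g(L)$. To extract anything about a component $V$ of $V_g(L)$ you must first know that $V$ has codimension $\leq \delta$ in $|L|$, so that a general $\P^\delta$ actually meets it; only then does KST force the general member of $V$ to be $\delta$-nodal. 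That lower bound on $\dim V$ is Proposition~\ref{p:estim} (from Dedieu--Sernesi), specific to $K3$, Enriques and Abelian surfaces, and it is the step your proposal omits entirely. (Your Riemann--Roch computation of $\dim|L|$ is correct and is indeed implicitly needed to convert those dimension estimates into codimension estimates, but it bounds the ambient space, not the Severi variety.) Without it, your parenthetical argument against a second component is circular: a hypothetical extra component of $V_g(L)$ need not meet $V^\delta(L)$ at all --- ruling that out is precisely the content of the dimension estimate plus KST. Note also that the numerically trivial canonical class is irrelevant to KST (their proposition holds on any smooth projective surface); it matters only for Proposition~\ref{p:estim}.

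Non-emptiness is likewise obtained differently in the paper. You quote the conditional form of Ballico's theorem ($V^\delta(L)$, \emph{if non-empty}, is irreducible of codimension $\delta$) and hope to supply non-emptiness from KST, which as noted does not deliver it. The paper instead uses Proposition~\ref{p:ballico}: the locus $\Sigma_\delta(L)$ of curves singular in at least $\delta$ points is unconditionally irreducible of codimension $\delta$. Since $\Sigma_\delta(L)$ has codimension $\leq\delta$ and parametrizes (reduced) curves of geometric genus $\leq p(L)-\delta$, the corollary of KST applies to it and shows its general member is $\delta$-nodal; this simultaneously gives $V^\delta(L)\neq\emptyset$, its irreducibility, and its density in $\Sigma_\delta(L)$. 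The endgame is then that every component of $V_g(L)$ (being of codimension $\leq\delta$ by Proposition~\ref{p:estim} and with $\delta$-nodal general member by KST) is dense in $\Sigma_\delta(L)$, whence irreducibility and the dimension count. To repair your write-up you should replace the appeal to ``KST for $V_g(L)$'' by the pair (Proposition~\ref{p:ballico} for $\Sigma_\delta(L)$, Proposition~\ref{p:estim} for the lower bound on components of $V_g(L)$), each fed into the genuine KST statement about general linear subsystems.
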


\medskip
On $K3$, Enriques, and Abelian surfaces, there are explicit necessary
and sufficient conditions for a line bundle to be $k$-spanned, resp.\
$k$-very ample, 
\cite{BFS,BS,terakawa,andreas-kth}. In particular, they say that being
$k$-spanned and $k$-very ample are two equivalent conditions.

\begin{remark}
The arguments given here don't ensure that the general member
of $V_g(L)$ is irreducible. In practice, this may be obtained by
studying the various possible splittings of $L$ and a dimension
argument. 
\end{remark}

\medskip
It is now common knowledge that if $(S,L)$ is a polarized $K3$ or
Abelian surface, then the equigeneric Severi variety $V_g(L)$ is pure
of the expected dimension, see \cite{DS} and the references therein
(this is stated here in Proposition~\ref{p:estim}).
For a general such surface, it is also known that the nodal Severi
variety is non-empty by \cite{chen-rat} for $K3$'s 
and \cite{KL-gen2} for Abelian surfaces.
The density of the nodal Severi variety in the equigeneric one 
was so far only known if in addition $L$ is primitive (and $g \geq 5$
in the Abelian case),  
see \cite{chen-simple,chen-nodal}
(as well as \cite{DS,KLM}) for the $K3$ case, 
and \cite{KLM} for the Abelian case.

\begin{remark}
On Enriques surfaces, it is proved in \cite{bdgk-indam} that the
irreducible components of the nodal
Severi variety $V^{p(L)-g}(L)$ have dimension either $g-1$ or $g$.
In the range of application of Theorem~\ref{t:Ktriv}, there is only one
component of dimension $g-1$, and the condition given in
\cite{bdgk-indam} to distinguish between the two cases tells us that
for a general $[C] \in V^{p(L)-g}(L)$, the pull-back of $K_S$ to the
normalization of $C$ is non-trivial.
\end{remark}

\bigskip
As the main step in his proof, Ballico establishes the following
statement. 

\begin{proposition}
\label{p:ballico}
Let $L$ be a line bundle on a smooth complex projective surface.
If $L$ is very ample and $(2\delta-1)$-spanned,
then the family $\Sigma_\delta (L)$ 
of all members of $|L|$ which are singular in
(at least) $\delta$ points is irreducible of codimension $\delta$ in
$|L|$. 
\end{proposition}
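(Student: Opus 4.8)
The plan is to realise $\Sigma_\delta(L)$ as the image of an incidence variety fibred over a space of point configurations, where the fibres are honest linear subsystems of $|L|$. Write $S^\delta_\circ \subset S^\delta$ for the open locus of $\delta$-tuples of pairwise distinct points; since $S$ is smooth and irreducible, $S^\delta_\circ$ is irreducible of dimension $2\delta$. Over it, form
\[
\Phi = \{ (p_1,\dots,p_\delta;C) : (p_i) \in S^\delta_\circ,\ C \in |L|,\ p_i \in \Sing(C)\ \forall i \},
\]
with projections $q\colon \Phi \to S^\delta_\circ$ and $\pi\colon \Phi \to |L|$. For a fixed configuration, the condition $p_i \in \Sing(C)$ means that $C$ contains the length-three fat point $\mathfrak m_{p_i}^2$, so the fibre of $q$ over $(p_i)$ is $\P\, H^0(L \otimes I_Z)$ with $Z = \sum_i \mathfrak m_{p_i}^2$ a subscheme of length $3\delta$. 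Each such fibre is a projective space, hence irreducible; as the base $S^\delta_\circ$ is irreducible, it will follow that $\Phi$ is irreducible as soon as these fibres have constant dimension over a dense open.

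The crux is therefore to compute $h^0(L \otimes I_Z)$, that is, to show that imposing a singular point at $\delta$ general points imposes $3\delta$ independent conditions on $|L|$, so that the general fibre of $q$ has the expected dimension $\dim|L|-3\delta$ and $\Phi$ has a unique component dominating $S^\delta_\circ$, irreducible of dimension $2\delta + (\dim|L|-3\delta) = \dim|L|-\delta$. This is where the hypothesis must enter, and it is the \emph{main obstacle}: a node is a non-curvilinear scheme and the total length $3\delta$ exceeds the length $2\delta$ of the curvilinear subschemes directly governed by $(2\delta-1)$-spannedness. The bridge I would use is specialisation together with semicontinuity, since it suffices to produce one configuration for which the conditions are independent. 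Placing the $p_i$ on a single smooth curve $D \subset S$, the tangential part of the $\delta$ node conditions is the order-two vanishing along $D$ at the $p_i$, i.e. restriction to the length-$2\delta$ curvilinear subscheme $\sum_i 2p_i \subset D \subset S$, which is surjective precisely by $(2\delta-1)$-spannedness; the residual $\delta$ normal conditions are then handled by letting $D$, and the $p_i$ on it, vary and invoking openness of the independence locus. The same input must also bound the locus in $S^\delta_\circ$ over which $h^0(L\otimes I_Z)$ jumps, so as to exclude spurious higher-dimensional components of $\Phi$.

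Granting this, $\Phi$ is irreducible of dimension $\dim|L|-\delta$, and $\Sigma_\delta(L)=\pi(\Phi)$ is irreducible as the image of an irreducible variety. It then remains to see that $\pi$ is generically finite, giving $\dim \Sigma_\delta(L)=\dim|L|-\delta$, that is codimension $\delta$. For this I would check that the general member is reduced with isolated singularities, so that $\pi$ has finite general fibre; members that are non-reduced or singular along a curve move in a family of dimension strictly smaller than $\dim|L|-\delta$ once $L$ is very ample. Finally, the general member has exactly $\delta$ ordinary nodes: a worse singularity, or an additional singular point, forces the curve into a locus of codimension strictly greater than $\delta$ (a cusp alone imposes codimension two against the codimension one of a node), and these loci have the expected codimension by the same independence established above. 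Hence the general member of $\Sigma_\delta(L)$ is $\delta$-nodal and $\Sigma_\delta(L)$ is irreducible of codimension $\delta$, as claimed.
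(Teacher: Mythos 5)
First, a point of context: the paper does not prove this Proposition at all --- it is quoted verbatim from \cite{ballico} as ``the main step in his proof'', so your attempt has to be measured against Ballico's argument rather than anything in this note. Your overall architecture (the incidence variety $\Phi$ over configurations of $\delta$ distinct points, with fibres the linear systems $\P H^0(L\otimes I_Z)$, $Z=\bigcup_i \mathfrak m_{p_i}^2$ of length $3\delta$) is the right one. But the crux, as you yourself identify, is showing that $Z$ imposes $3\delta$ independent conditions, and this is where your argument has a genuine gap. Your specialisation to points on a curve $D$ only controls the $2\delta$ ``tangential'' conditions; the residual $\delta$ normal conditions are waved away by ``letting $D$ vary and invoking openness'', which is not an argument --- openness of the independence locus only helps once you have exhibited one configuration where independence holds, and you never do. Worse, you only aim at the \emph{generic} configuration and explicitly postpone bounding the jumping locus of $h^0(L\otimes I_Z)$; without that, $\Phi$ may acquire components over special configurations of dimension $\geq \dim|L|-\delta$, and irreducibility fails. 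The clean way out (and, in substance, Ballico's) is to prove independence for \emph{every} configuration of $\delta$ distinct points: the fat point $\mathfrak m_{p}^2$ is the scheme-theoretic union of two transverse curvilinear tangent vectors, so its span in the embedding by $|L|$ is the embedded tangent plane $T_pS\cong\P^2$; a linear dependence among the planes $T_{p_1}S,\dots,T_{p_\delta}S$ produces, by choosing the lines through the $p_i$ witnessing the dependence, a \emph{curvilinear} subscheme of length $\leq 2\delta$ failing to impose independent conditions, contradicting $(2\delta-1)$-spannedness. With that lemma all fibres of $q$ have the same dimension, the jumping problem disappears, and $\Phi$ is irreducible of dimension $\dim|L|-\delta$.

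Two further steps are asserted rather than proved. Generic finiteness of $\pi$ amounts to the general member of the image being reduced; your claim that non-reduced members ``move in a family of dimension strictly smaller than $\dim|L|-\delta$ once $L$ is very ample'' needs the estimate $h^0(L-D)\leq h^0(L)-2\delta$ for every curve $D$ (a curve contains curvilinear subschemes of length $2\delta$, which impose independent conditions), together with the bound $\dim(\Sing C)^\delta\leq\delta$ on the fibres --- the naive count is borderline, so this deserves care. Finally, your last paragraph, claiming that the general member of $\Sigma_\delta(L)$ is exactly $\delta$-nodal ``by the same independence established above'', is unjustified and in fact beside the point: bounding the codimension of the loci of cuspidal or worse singularities does not follow from independence of the conditions at the singular points (this is precisely the hard part of Severi-variety theory), and it is not part of the statement of Prop.~\ref{p:ballico}. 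Indeed the raison d'\^etre of the present note is that the nodality of the general member is supplied not by Ballico's proposition but by the Kool--Shende--Thomas result, Prop.~\ref{p:kst}.
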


\medskip
The result of Kool, Shende and Thomas that we use is the following,
see \cite[Prop.~2.1]{kst}.

\begin{proposition}%[KST \cite{kst}]
\label{p:kst}
Let $L$ be a line bundle on a smooth complex projective surface.
If $L$ is $\delta$-very ample, then the general $\delta$-dimensional
linear subsystem $\P^\delta \subseteq |L|$ contains a finite number of
$\delta$-nodal curves, % (possibly none), 
and all other members are reduced curves of
geometric genus $p_g > p(L) -\delta$.
\end{proposition}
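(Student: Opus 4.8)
The plan is to reduce the statement to two codimension estimates inside $|L|$ and then to cut with a general $\delta$-plane. For a reduced curve $C\in|L|$ with normalisation $\nu\colon \tilde C\to C$, recall that the arithmetic genus $p_a(C)=p(L)$ is constant on $|L|$ and that
$\delta(C):=\mathrm{length}\bigl(\nu_*\O_{\tilde C}/\O_C\bigr)=p(L)-p_g(C)=\sum_{x\in C_{\mathrm{sing}}}\delta_x$,
so the condition $p_g(C)>p(L)-\delta$ is exactly $\delta(C)<\delta$. I would aim to prove: \textbf{(a)} the locus $W:=\{C\in|L|: C\text{ reduced},\ \delta(C)\geq\delta\}$ has codimension exactly $\delta$ in $|L|$, with generic point along each top-dimensional component a $\delta$-nodal curve; and \textbf{(b)} the bad locus $B$, consisting of the non-reduced members of $|L|$ together with the reduced $C$ having $\delta(C)\geq\delta$ that are \emph{not} $\delta$-nodal, has codimension $\geq\delta+1$.

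Granting (a) and (b), the conclusion is pure general position. A general $\P^\delta\subseteq|L|$ meets a fixed closed subvariety $Y$ in dimension $\dim Y+\delta-\dim|L|=\delta-\mathrm{codim}\,Y$: applied to $B$ (codimension $\geq\delta+1$) this gives dimension $\leq -1$, so $\P^\delta\cap B=\varnothing$; applied to $\overline W$ (codimension $\delta$) it gives dimension $0$, so $\P^\delta\cap\overline W$ is finite. Since every point of $\overline W\setminus V^\delta(L)$ is either non-reduced or reduced-but-not-$\delta$-nodal, it lies in $B$; hence $\P^\delta\cap\overline W=\P^\delta\cap V^\delta(L)$ is a finite set of genuine $\delta$-nodal curves. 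Any remaining member of $\P^\delta$ avoids both $B$ and $W$, so it is reduced with $\delta(C)<\delta$, i.e.\ of geometric genus $p_g>p(L)-\delta$.

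It remains to establish (a) and (b), and this is where $\delta$-very ampleness enters. The per-point input is the classical principle that forcing a prescribed singularity of delta-invariant $m$ at a varying point of $S$ imposes at least $m$ conditions on $|L|$, with equality precisely for an ordinary double point (a node at a fixed point is the three conditions that a defining section and its two first derivatives vanish, of which two are recovered by letting the point move). Summing over the at most $\delta$ singular points of a reduced $C$ with $\delta(C)\geq\delta$ yields codimension $\geq\delta$, with strict inequality unless every singularity is a node; this gives the estimate in (a) and the reduced part of (b). The heuristic becomes a proof once one knows the conditions carried by the distinct singular points are \emph{independent}, and this independence is exactly what $\delta$-very ampleness supplies, through the vanishing of the relevant $H^1(S,L\otimes I_\xi)$ for the length-$\leq\delta$ subschemes $\xi$ governing equigeneric deformations (on a surface $C$ is Gorenstein, and the adjoint scheme cut by the conductor has length $\delta(C)$). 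For the non-reduced members one argues separately: such a $C$ contains $2D$ for some nonzero effective $D$ with $2D\leq L$, and very ampleness of $L$ bounds $h^0(L-2D)$ so that these curves sweep out a locus of codimension $>\delta$.

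I expect the main obstacle to be the sharp form of (a): obtaining codimension \emph{exactly} $\delta$ from the hypothesis $\delta$-very ample, rather than from the much stronger condition that a naive tangent-space count would demand. Indeed, imposing $\delta$ nodes at assigned points concerns the length-$3\delta$ second-order scheme $\bigsqcup_i \mathfrak m_{x_i}^2$, whose independence would a priori require $(3\delta-1)$-very ampleness. The resolution, which is the technical heart of \cite{kst}, is that the drop in geometric genus is controlled not by that second-order scheme but by the length-$\delta(C)$ conductor/adjoint scheme; phrased on the relative Hilbert scheme of points of the universal curve over $|L|$, $\delta$-very ampleness is precisely the positivity making the governing incidence correspondence generically finite of the expected dimension. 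Converting the per-point codimension principle into the global bound under this optimal hypothesis — in particular controlling how badly the singular points are allowed to collide — is the step that requires genuine work.
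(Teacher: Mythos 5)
The paper does not actually prove this statement: it is imported verbatim from \cite[Prop.~2.1]{kst}, so there is no internal proof to compare yours with, and the evaluation below is against what is needed to make your argument self-contained (equivalently, against the argument of \cite{kst}). Your reduction to the two codimension estimates (a) and (b), and the general-position argument deducing the proposition from them, are correct and do reflect the overall strategy. One quibble there: you should only claim codimension $\geq \delta$ in (a), since ``exactly $\delta$'' (and non-emptiness of $V^\delta(L)$) does not follow from $\delta$-very ampleness alone --- indeed the whole point of the Corollary in the paper is that non-emptiness requires Ballico's Proposition in addition; your general-position step works just as well with the inequality.

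The genuine gap is that (a) and (b) are asserted, not proved, and the one concrete mechanism you propose cannot deliver the crucial part of (b). For the reduced part of (b) you need the locus of reduced curves with $\delta(C)\geq\delta$ that are not $\delta$-nodal to have codimension $\geq\delta+1$, and you propose to get the independence of conditions from $\delta$-very ampleness applied to the length-$\delta(C)$ conductor scheme. This tops out at codimension $\delta$: for example a curve with one tacnode and $\delta-2$ nodes is immersed, has $\delta(C)=\delta$, and its conductor scheme has length exactly $\delta$, so the tangent-space-through-the-conductor estimate gives $\geq\delta$ and nothing more, while the needed bound is $\geq\delta+1$. The extra $+1$ for non-nodal singularities comes from equisingular-type conditions supported on punctual schemes strictly longer than the conductor (length $\delta+2$ or more in the tacnode example), and establishing their independence under the sharp hypothesis of mere $\delta$-very ampleness is precisely the content of the proposition; you explicitly flag this as ``the step that requires genuine work'' and then do not do it. Similarly, the bound for the non-reduced locus requires $k$-very ampleness to control $h^0(\O(D))+h^0(L-2D)$, not just very ampleness as you state. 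So what you have is a correct reduction together with an accurate diagnosis of where the difficulty lies, rather than a proof.
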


\medskip
This has the following immediate corollary: in the setting of the
proposition, if $V$ is an irreducible
variety of codimension $\leq \delta$ in $|L|$ parametrizing curves of
geometric genus $p_g \leq p(L)-\delta$, then the general member of $V$
is in fact a $\delta$-nodal curve.

\begin{corollary}
If $L$ is $\delta$-very ample and $(2\delta-1)$-spanned, then the
Severi variety of nodal curves $V^\delta(L)$ is non-empty and
irreducible of codimension $\delta$ in $|L|$.
\end{corollary}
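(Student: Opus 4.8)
The plan is to combine Ballico's Proposition~\ref{p:ballico}, which supplies an irreducible family of the correct codimension, with the immediate corollary of Proposition~\ref{p:kst} recorded just above, which upgrades the general member of such a family to a $\delta$-nodal curve. First I would observe that $\delta$-very ampleness implies very ampleness (for $\delta\geq 1$, as $k$-very ampleness descends to $(k-1)$-very ampleness, and $1$-very ampleness is very ampleness); together with the $(2\delta-1)$-spannedness this places us in the hypotheses of Proposition~\ref{p:ballico}, which gives that the family $\Sigma_\delta(L)$ of members of $|L|$ singular in at least $\delta$ points is irreducible of codimension exactly $\delta$ in $|L|$.

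Next I would apply the immediate corollary to $V=\Sigma_\delta(L)$. Its codimension being $\delta$ settles the codimension hypothesis; for the genus bound, a reduced member of $\Sigma_\delta(L)$ has at least $\delta$ singular points, each contributing at least $1$ to the total $\delta$-invariant, so its geometric genus is at most $p(L)-\delta$. Since $L$ is $\delta$-very ample, the corollary then yields that the general member of $\Sigma_\delta(L)$ is $\delta$-nodal; in particular $V^\delta(L)\neq\emptyset$.

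It remains to transfer irreducibility from $\Sigma_\delta(L)$ to $V^\delta(L)$. Being $\delta$-nodal is an open condition inside $\Sigma_\delta(L)$: the locus of members with strictly more than $\delta$ singular points is the closed subset $\Sigma_{\delta+1}(L)$, and requiring every singularity to be a node is likewise open. Thus $V^\delta(L)$ is a non-empty open subset of the irreducible variety $\Sigma_\delta(L)$, hence itself irreducible, dense in $\Sigma_\delta(L)$, and of codimension $\delta$ in $|L|$, as desired.

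The only genuinely delicate point is the genus bound feeding the corollary, which a priori only makes sense for reduced curves; one must check that the general member of $\Sigma_\delta(L)$ is reduced. This is immediate from Proposition~\ref{p:kst} itself, since a general member of $\Sigma_\delta(L)$ lies on a general $\P^\delta\subseteq|L|$, all of whose members are reduced. The remaining verifications—openness of the $\delta$-nodal locus and the transfer of irreducibility to a dense open subset—are routine.
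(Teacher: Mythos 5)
Your proof is correct and follows essentially the same route as the paper's: Proposition~\ref{p:ballico} makes $\Sigma_\delta(L)$ irreducible of codimension $\delta$, Proposition~\ref{p:kst} (via the immediate corollary) makes its general member $\delta$-nodal, and $V^\delta(L)$ is then a non-empty dense open subset of $\Sigma_\delta(L)$. The paper's own proof is a three-line version of exactly this argument, and your extra verifications (reducedness of the general member of $\Sigma_\delta(L)$, openness of the $\delta$-nodal locus) are correct elaborations of points it leaves implicit.
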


\medskip
\begin{proof}
Every irreducible component of $V^\delta(L)$ is contained in
$\Sigma_\delta(L)$. On the other hand, $\Sigma_\delta(L)$ is
irreducible of codimension 
$\delta$ by Prop.~\ref{p:ballico}, and has an open subset contained in
$V^\delta(L)$ by Prop.~\ref{p:kst}.
\end{proof}

\bigskip
In the cases of Theorem~\ref{t:Ktriv}, one has the following estimates
on the dimensions of the Severi varieties.

\begin{proposition}
\label{p:estim}
Let $S$ be a $K3$ (resp.\ Enriques, resp.\ Abelian) surface,
$L$ an effective line bundle on $S$,
and $g \leq p(L)$ an integer.
Every irreducible component of the equigeneric Severi variety
$V_g(L)$ has dimension $=g$ (resp.\ $\geq g-1$, resp.\ $=g-2$).
\end{proposition}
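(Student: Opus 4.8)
The plan is to bound each irreducible component $V$ of $V_g(L)$ from below and from above, and to observe that in the $K3$ and Abelian cases the two bounds meet. Throughout I set $\delta = p(L)-g$, fix a general point $[C]\in V$, and let $\nu\colon \bar C \to C \subseteq S$ be the normalization, so that $\bar C$ is smooth of genus $g$. I will use that on all three surfaces $K_S$ is numerically trivial (indeed $2K_S \equiv 0$), whence $K_S\cdot L = 0$ and, by Riemann--Roch together with the vanishing of $H^1(S,L)$ available in the range of interest, $\dim|L| = p(L)$ (resp.\ $p(L)-1$, resp.\ $p(L)-2$) according as $\chi(\O_S) = 2,1,0$.

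For the lower bound I would argue, as is classical for Severi varieties (see \cite{DS} and the references therein), that the geometric genus drops by at most the total $\delta$-invariant: each singular point of $\delta$-invariant $\delta_p$ imposes at most $\delta_p$ conditions on $|L|$, since the $\delta$-constant stratum has codimension $\delta_p$ in the versal deformation of that singularity. Pulling back along $|L| \to \prod_p \mathrm{Def}(C,p)$ gives $\operatorname{codim}_{|L|} V \le \sum_p \delta_p = \delta$, hence $\dim V \ge \dim|L|-\delta$. Inserting the three values of $\dim|L|$ yields $\dim V \ge g$ (resp.\ $\ge g-1$, resp.\ $\ge g-2$); this already settles the Enriques assertion.

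For the upper bound I would pass to $\bar C$ and exploit the (numerical) triviality of $\omega_S$. Writing $N_\nu$ for the normal sheaf of $\nu$ and $\bar N_\nu$ for its torsion-free quotient, the tangent space to $V_g(L)$ at $[C]$ is contained in $H^0(\bar C,\bar N_\nu)$ (see \cite{DS}), while adjunction gives an inclusion $\bar N_\nu \hookrightarrow \omega_{\bar C}\otimes \nu^*\O_S(-K_S)$, an equality precisely when $\nu$ is an immersion. On a $K3$ this reads $\bar N_\nu \hookrightarrow \omega_{\bar C}$, so $\dim V \le h^0(\omega_{\bar C}) = g$; combined with the lower bound, $\dim V = g$, and equality even forces $\nu$ to be an immersion. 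On an Abelian surface the same inclusion only gives the provisional bound $\dim V \le g$, which is two too large: one must additionally intersect with the tangent space to $|L|$ itself. Equivalently, composing the deformations of $\nu$ with the translation map $S\to\Pic^0(S)$ and using that the polarization isogeny $\phi_L$ is surjective, the induced map $H^0(\bar C,\bar N_\nu)\to H^1(S,\O_S)$ is onto; quotienting by its $2$-dimensional image yields $\dim V \le g-2$, hence $=g-2$.

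The main obstacle is exactly this Abelian $-2$: one must show that the two-dimensional space $H^1(S,\O_S)$ of ``out of $|L|$'' first-order motions is genuinely realised inside $H^0(\bar C,\bar N_\nu)$, so that it may be subtracted rather than merely bounded, and the translation argument above is what provides this. A secondary technical point is the bookkeeping of the torsion of $N_\nu$ for a possibly non-nodal general member, that is, the precise inclusion $\bar N_\nu \subseteq \omega_{\bar C}\otimes\nu^*\O_S(-K_S)$ and its failure to be surjective off the immersion locus. Finally, on Enriques surfaces the exact value within $\{g-1,g\}$ depends on whether $\nu^*K_S$ is trivial on $\bar C$, which is why only the inequality $\dim V \ge g-1$ is asserted in that case.
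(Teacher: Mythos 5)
Your proposal is correct and is essentially the argument the paper itself relies on: the paper offers no proof beyond the citation of \cite{DS} (Lem.~2.3 and ineq.~(2.6) for the lower bound, Prop.~4.5 and 4.13 for the ``extra argument''), and your two steps --- codimension at most $\delta=p(L)-g$ in $|L|$ from the $\delta$-constant strata of the versal deformations, and the upper bound $\dim V \leq h^0(\bar N_\nu) \leq h^0(\omega_{\bar C})=g$ refined on Abelian surfaces by the translation/isogeny argument killing the two extra dimensions of $\H^1(S,\O_S)$ --- are precisely what those references establish. The one point worth flagging is that $h^0(\omega_{\bar C})=g$ uses connectedness of $\bar C$, i.e.\ irreducibility of the general member of the component $V$; for reduced but reducible curves the bookkeeping of the genus convention (and hence of both bounds) has to be done as in \cite{DS}.
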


\medskip
For Enriques surfaces, the estimate follows from
\cite[Lem.~2.3 and ineq.~(2.6)]{DS}. For $K3$ and Abelian surfaces a
well known extra argument is needed, see 
\cite[Prop.~4.5 and 4.13]{DS}.

\bigskip
\begin{proof}[Proof of Theorem~\ref{t:Ktriv}]%
As we have observed above, under the assumptions of
Theorem~\ref{t:Ktriv}, $L$ is actually $(2p(L)-2g-1)$-very ample,
hence also $(p(L)-g)$-very ample, so that both Propositions
\ref{p:ballico} and \ref{p:kst} apply for $\delta=p(L)-g$.
It follows that $V^\delta(L)$ is an irreducible, dense, non-empty, open
subset of $\Sigma_\delta(L)$.
% contained in $V_g(L)$.

On the other hand, let $V$ be an irreducible component of $V_g(L)$. 
By Prop.~\ref{p:estim}, $V$ has codimension $\leq \delta$ in
$|L|$. 
It thus follows from Prop.~\ref{p:kst} that the
general member of $V$ is a $\delta$-nodal curve, hence $V$ is
contained, and actually dense in $\Sigma_\delta(L)$.
This concludes the proof.
\end{proof}

\begin{closing}
%\clearpage
% \bibliographystyle{mysmf-alpha}
% \bibliography{addendum}
% \nocite{*}

\providecommand{\bysame}{\leavevmode\hbox to3em{\hrulefill}\thinspace}
\providecommand{\og}{``}
\providecommand{\fg}{''}
\providecommand{\smfandname}{and}
\providecommand{\smfedsname}{\'eds.}
\providecommand{\smfedname}{\'ed.}
\providecommand{\smfmastersthesisname}{M\'emoire}
\providecommand{\smfphdthesisname}{Th\`ese}

\medskip\noindent
Thomas Dedieu.
Institut de Mathématiques de Toulouse~; UMR5219.
Université de Toulouse~; CNRS.
UPS IMT, F-31062 Toulouse Cedex 9, France.
\texttt{thomas.dedieu@math.univ-toulouse.fr}

\end{closing}

\end{document}